\theoremstyle{plain}
\newtheorem{theorem}{Theorem}[section]
\newtheorem{proposition}[theorem]{Proposition}
\newtheorem{corollary}[theorem]{Corollary}
\theoremstyle{definition}
\newtheorem{definition}[theorem]{Definition}
\newtheorem{example}[theorem]{Example}
\newtheorem{remark}[theorem]{Remark}
\newtheorem{question}[theorem]{Question}
\newtheorem{conjecture}[theorem]{Conjecture}
\theoremstyle{remark}
\newcommand{\Z}{\mathbb{Z}}
\newcommand{\F}{\mathbb{F}}
\title[]{New classes of reversible cellular automata}
\author[Haugland]{Jan Kristian Haugland}
\address{Norwegian National Security Authority (NSM)\\Norway}
\email{admin@neutreeko.net}
\author[Omland]{Tron Omland}
\address{Norwegian National Security Authority (NSM) \and Department of Mathematics, University of Oslo\\Norway}
\email{tron.omland@gmail.com}
\date{November 1, 2024}
\begin{document}
	
	\begin{abstract}
		A Boolean function $f$ on $k$~bits induces a shift-invariant vectorial Boolean function $F$ from $n$~bits to $n$~bits for every $n\geq k$. If $F$ is bijective for every $n$, we say that $f$ is a proper lifting, and it is known that proper liftings are exactly those functions that arise as local rules of reversible cellular automata. We construct new families of such liftings for arbitrary large $k$ and discuss whether all have been identified for $k\leq 6$.
	\end{abstract}
	
	\maketitle
	
	\section*{Introduction}
	
	Shift-invariant functions are integral to symmetric cryptography, especially for lightweight cryptography, particularly in designing substitution boxes (S-boxes) for block ciphers and hash functions. These functions, and also connections to their associated cellular automata, which have a broad range of applications, have been studied by several authors, see e.g., \cite{Mariot-24} for a survey.
	
	Any shift-invariant function on $n$-bits is derived by a local rule, that is, a Boolean function on $k$-bits for some $k\leq n$. Conversely, every Boolean function $f \colon \F_2^k \to \F_2$ induces a shift-invariant function $F\colon\F_2^n\to\F_2^n$ for every $n\geq k$. If for every such $n$ the induced map $F$ is bijective, then $f$ is called a proper lifting, and the corresponding cellular automaton is reversible.
	
	This work constructs and classifies new families of proper liftings, thus advancing the understanding of reversible cellular automata, and evaluate their resistance to differential cryptanalysis.
	
	In Section~1, we describe a new way of produce proper liftings through composition of landscape functions, generalizing existing constructions coming from sets of landscapes, see e.g., \cite{JDA-thesis}. In Section~2, we study in detail the case $k=6$, and analyze whether these families are exhaustive. Finally, in Section~4, we present a few other constructions of families of proper liftings.
	
	\section{Proper liftings and reversible cellular automata}
	
	A Boolean function $f\colon\F_2^k\to\F_2$ induces a function $F\colon\F_2^n\to\F_2^n$ for every $n\geq k$, by
	\[
	F(x_1,x_2,\dotsc,x_n)=\big(f(x_1,x_2,\dotsc,x_k),f(x_2,x_3,\dotsc,x_{k+1}), \dotsc,f(x_n,x_1,\dotsc,x_{k-1})\big),
	\]
	which is shift-invariant (or rotation-symmetric), that is, 
	commutes with the right shift operator.
	In fact, every shift-invariant function arises this way. If $f$ has diameter~$k$, i.e., depends on both $x_1$ and $x_k$, and $F$ is bijective, we say that $f$ is a $(k,n)$-lifting. If $f$ is a $(k,n)$-lifting for every $n\geq k$, then $f$ is called a \emph{proper lifting} (see~\cite{HO-1}).
	
	A Boolean function $f$ of diameter $k$ also induces a function $F\colon\F_2^\Z\to\F_2^\Z$, defined by
	\[
	F(x)_i = f(x_i,x_{i+1},\dotsc,x_{i+k-1}),
	\]
	which is a called a cellular automaton.
	It is known that $F$ is bijective if and only if $f$ is a proper lifting, see \cite[Theorem~7]{kari-survey}, and in this case $F$ is called a \emph{reversible cellular automaton}.
	
	\begin{remark}
		If $F\colon\F_2^n\to\F_2^n$ is a shift-invariant function and the smallest and largest indices of the variables that $F(x)_1$ depend upon are $i$ and $j$, respectively, then $F$ has diameter $k=j-i+1$. If $\sigma$ denotes the right shift, then the function $F'=F\circ \sigma^{1-i}$ is also shift-invariant, and $F'(x)_1$ depends on both $x_1$ and $x_k$, but no $x_\ell$ for $\ell>k$.
	\end{remark}
	
	\begin{proposition}\label{expand}
		Let $f\colon\F_2^k\to\F_2$ be a Boolean function, $s$ any integer $\geq 2$, and define the function $f_s\colon\F_2^{(k-1)s+1}\to\F_2$ by
		\[
		f_s(x_1,\dotsc,x_{(k-1)s+1})=f(x_1, x_{s+1}, \dotsc, x_{(k-1)s+1}).
		\]
		If $f$ is a $(k, n)$-lifting for all $n \geq k$, then $f_s$ is a $((k-1)s+1, n)$-lifting for all $n \geq (k-1)s+1$.
	\end{proposition}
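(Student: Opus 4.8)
The plan is to show that the induced map $F_s$ on $\F_2^n$ splits as a direct product of copies of the induced map $F$ on shorter cycles, and then to invoke the hypothesis that $F$ is bijective on those shorter cycles. Throughout I work in $\Z/n\Z$, identified with $\{1,\dotsc,n\}$, so that $F_s(x)_i = f(x_i, x_{i+s}, x_{i+2s},\dotsc,x_{i+(k-1)s})$ with all indices read modulo $n$. The case $k=1$ is trivial, since there $f_s=f$, so I assume $k\ge 2$. As a preliminary, I would record the diameter: since $f$ has diameter $k$ it depends on both its first and last argument, so $f_s$ depends on $x_1$ and on $x_{(k-1)s+1}$, and hence $f_s$ has diameter exactly $(k-1)s+1$, as required.

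The heart of the argument is the decomposition. Set $d=\gcd(s,n)$ and $m=n/d$. The key observation is that, for any index $i$, all the arguments $i, i+s,\dotsc,i+(k-1)s$ of $F_s(x)_i$ lie in the single coset $i+\langle s\rangle$ of the subgroup $\langle s\rangle\subseteq\Z/n\Z$, which has order $m$. Consequently the coordinates of $F_s$ indexed by one coset depend only on the inputs indexed by that same coset, and $F_s$ factors as a direct product of the $d$ maps it induces on the individual cosets. On a fixed coset, listing its elements as $c, c+s,\dotsc,c+(m-1)s$ and relabelling $y_j = x_{c+js}$ for $j\in\Z/m\Z$, I would check that $F_s(x)_{c+js} = f(y_j, y_{j+1},\dotsc,y_{j+k-1})$ with indices taken modulo $m$; that is, each factor is precisely the length-$m$ lifting of $f$ acting on $\F_2^m$.

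It then remains to verify that $f$ really is a $(k,m)$-lifting, i.e.\ that $m\ge k$, so that the hypothesis on $f$ applies to each factor. This is the one step I expect to require genuine care, since it is exactly where the bound $n\ge (k-1)s+1$ must be spent. Writing $s=ds'$ with $s'\ge 1$, from $n\ge (k-1)s+1=(k-1)ds'+1$ I get $m=n/d\ge (k-1)s'+1/d>(k-1)s'$, and since $m$ is an integer this forces $m\ge (k-1)s'+1\ge k$, using $s'\ge 1$ and $k\ge 2$.

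Once $m\ge k$ is established, each factor of $F_s$ is the length-$m$ lifting of $f$, which is a bijection of $\F_2^m$ because $f$ is a $(k,m)$-lifting by hypothesis. Therefore $F_s$, being a direct product of these bijections across the $d$ cosets, is a bijection of $\F_2^n\cong(\F_2^m)^{d}$. Combined with the diameter computation, this shows that $f_s$ is a $((k-1)s+1,n)$-lifting for every $n\ge (k-1)s+1$, which completes the proof. The only delicate points are the index bookkeeping in the coset decomposition and the integrality argument giving $m\ge k$; everything else is a direct consequence of $F_s$ respecting the coset partition determined by $\gcd(s,n)$.
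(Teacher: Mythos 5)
Your proof is correct and follows essentially the same route as the paper's: both decompose the $n$ coordinates into the $\gcd(n,s)$ cosets of $\langle s\rangle$ in $\Z/n\Z$, identify $F_s$ with a direct product of copies of the length-$\frac{n}{\gcd(n,s)}$ lifting of $f$, and check that $\frac{n}{\gcd(n,s)}\geq k$ so the hypothesis applies. You simply spell out the index bookkeeping and the integrality step that the paper's one-line version ($\frac{n}{\gcd(n,s)}\geq\frac{n}{s}>k-1$) leaves terse.
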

	
	\begin{proof}
		Take any integer $n \geq (k-1)s+1$. Since $\frac{n}{\gcd(n, s)}$ is an integer $\geq \frac{n}{s} > k-1$, $f$ is a $(k, \frac{n}{\gcd(n, s)})$-lifting. Let $F$ be the corresponding bijection. Moreover, $$(x_1, \dotsc, x_n) \longleftrightarrow \{(x_1, x_{s+1}, \dotsc), (x_2, x_{s+2}, \dotsc), \dotsc, (x_{\gcd(n, s)}, x_{s+\gcd(n, s)}, \dotsc)\}$$ is a bijection. The result follows by applying $F$ to the tuplets on the right hand side.
	\end{proof}

	\begin{example}
		Let $f\colon\F_2^4\to\F_2$ be given by $f(x)=x_2\oplus x_1(x_3\oplus 1)x_4$, which is known to be a proper lifting, first observed in \cite{patt}. Then $f_3\colon\F_2^{10}\to\F_2$ is given by $f_3(x)=f(x_1,x_4,x_7,x_{10})$ and is also a proper lifting.
	\end{example}

	\begin{proposition}\label{shift-product}
		Let $f\colon\F_2^k\to\F_2$ be a Boolean function of diameter $k$, and suppose that for some $j$, $f(x_1, \dotsc, x_k) \oplus x_j$ does not depend on $x_j$. Furthermore, suppose that for each $t$, $1-j \leq t \leq k-j$ such that $f(x_1, \dotsc, x_k) \oplus x_j$ depends on $x_{j+t}$, we have $(f(x_1, \dotsc, x_k) \oplus x_j) (f(x_{1+t}, \dotsc, x_{k+t}) \oplus x_{j+t}) = 0$. Then $f$ is a $(k, n)$-lifting for each $n \geq k$.
	\end{proposition}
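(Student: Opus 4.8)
The plan is to prove that the induced map $F\colon\F_2^n\to\F_2^n$ is injective for every $n\geq k$; since $\F_2^n$ is finite, injectivity yields bijectivity, and together with the assumed diameter~$k$ this makes $f$ a $(k,n)$-lifting. Write $g=f\oplus x_j$, which by hypothesis does not depend on its $j$-th argument, so that $F(x)_i=x_{i+j-1}\oplus g(x_i,\dotsc,x_{i+k-1})$. First I would reindex by the \emph{pivot} $p=i+j-1$; since $i\mapsto i+j-1$ is a bijection of $\Z/n$, the equation $y=F(x)$ becomes, for each $p$,
\[
x_p=b_p\oplus \hat h_p(x),
\]
where $b_p=y_{p-j+1}$ is determined by $y$ and $\hat h_p(x)=g(x_{p-j+1},\dotsc,x_{p-j+k})$ depends only on the coordinates $x_{p+t}$ with $t\in T$, where $T=\{t : g\text{ depends on its }(j+t)\text{-th argument}\}$ and $0\notin T$. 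It therefore suffices to show that this system has a unique solution $x$ for each $b$.

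The engine of the argument is an \emph{active set} reformulation. For any $z\in\F_2^n$ put $A(z)=\{p:\hat h_p(z)=1\}$. The product hypothesis states exactly that $\hat h_p\,\hat h_{p+t}\equiv 0$ for every $t\in T$, which means that $A(z)$ is \emph{$T$-independent}: it never contains two positions $p$ and $p+t$ with $t\in T$. Moreover, any solution $x$ of the system satisfies $x=b\oplus\mathbf 1_{A(x)}$, since $x_p\neq b_p$ precisely when $\hat h_p(x)=1$.

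The decisive step is to pin down $A(x)$ from $b$ alone. If $p\in A(x)$, then by $T$-independence none of the neighbours $p+t$ (for $t\in T$) lies in $A(x)$, so $x$ and $b$ agree on all coordinates that $\hat h_p$ reads; hence $\hat h_p(b)=\hat h_p(x)=1$, proving $A(x)\subseteq A^*(b):=\{p:\hat h_p(b)=1\}$. Now $A^*(b)$ is itself $T$-independent, by applying the product hypothesis at the single input~$b$. Suppose some $p\in A^*(b)\setminus A(x)$; then $\hat h_p(b)=1\neq 0=\hat h_p(x)$, so $x$ and $b$ differ in some coordinate $p+t$ with $t\in T$, i.e.\ $p+t\in A(x)\subseteq A^*(b)$. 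But then $p$ and $p+t$ both lie in $A^*(b)$ with $t\in T$, contradicting its $T$-independence. Hence $A(x)=A^*(b)$, which depends only on $b$, and so $x=b\oplus\mathbf 1_{A^*(b)}$ is forced. This gives uniqueness and therefore injectivity.

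I expect the main temptation, and the main obstacle, to be the urge to prove surjectivity directly by verifying that $b\oplus\mathbf 1_{A^*(b)}$ is an actual preimage. This runs into an \emph{activation} phenomenon: after flipping the coordinates in $A^*(b)$, a position outside $A^*(b)$ may acquire $\hat h_p=1$, so the naive candidate need not be self-consistent. The clean route is to avoid this entirely by proving injectivity and invoking finiteness of $\F_2^n$. Finally, I would emphasise that $T$-independence of \emph{every} active set---equivalently, the assumption that the product vanishes for every relevant offset $t$, not merely for some---is precisely what powers the uniqueness step, so the full strength of the hypothesis is used.
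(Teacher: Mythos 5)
Your argument is correct, and its combinatorial engine coincides with the paper's: the two inclusions $A(x)\subseteq A^*(F(x))$ and $A^*(F(x))\subseteq A(x)$ are exactly the paper's steps (a) and (c), there phrased as ``$F(x)_i\neq x_i$ if and only if $F(F(x))_i\neq F(x)_i$''. Where you diverge is only at the finish: you stop at uniqueness of the preimage and invoke finiteness of $\F_2^n$, whereas the paper runs the same computation once more and concludes $F\circ F=\mathrm{id}$, i.e.\ that $F$ is an involution. Your stated worry about ``activation'' of positions outside $A^*(b)$ is in fact unfounded under these hypotheses: if some $p\notin A^*(b)$ had $\hat h_p\bigl(b\oplus\mathbf 1_{A^*(b)}\bigr)=1$, then $b$ and the candidate would differ at some read position $p+t$ with $t\in T$, forcing $p+t\in A^*(b)$ and hence $\hat h_{p+t}=1$ on the candidate as well, contradicting the product hypothesis applied to the candidate itself; so $b\oplus\mathbf 1_{A^*(b)}$ really is a preimage. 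Indeed, in pivot-centered coordinates this candidate is just $F(y)$, so the verification you skip is precisely the statement $F(F(y))=y$. The involution property is not cosmetic --- the paper uses it immediately afterwards to identify these liftings with the Toffoli--Margolus ``conserved landscapes'' satisfying $F^2=I$ --- so your route, while valid and marginally more economical, delivers a strictly weaker structural conclusion than the paper's.
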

	
	\begin{proof}
		We prove the equivalent statement that $F\colon\F_2^n\to\F_2^n$ given by $$F(x_1, \dotsc, x_n)_i = f(x_{i-j+1}, \dotsc, x_{i-j+k})$$ is a bijection, where the indices are considered $\operatorname{mod} n$. More specifically, we prove that $F(F(x)) = x$. To this end, we verify that for any $i$, $F(x)_i = x_i$ if and only if $F(F(x))_i = F(x)_i$. Thus, either $F(x)_i = x_i$ and $F(F(x))_i = F(x)_i$, or $F(x)_i \neq x_i$ and $F(F(x))_i \neq F(x)_i$. In both cases, $F(F(x))_i = x_i$.
		
		Let $T= \{t \mid f(x_1, \dotsc, x_k) \oplus x_j \text{ depends on } x_{j+t}\}$, and let $x \in \F_2^n$ and $i \in \{0, \dotsc, n-1\}$.
		
		(a) Suppose $F(x)_i \neq x_i$. This is equivalent to $f(x_{i-j+1}, \dotsc, x_{i-j+k}) \oplus x_i = 1$. By the requirement of the proposition, it follows that $F(x)_{i+t} = x_{i+t}$ for any $t \in T$. Since $F(x)_i$ only depends on the $x_{i+t}$, we conclude that $F(F(x))_i \neq F(x)_i$.
		
		(b) Next, we apply the contrapositive of (a) to $x_{i+t}$, $F(x)_{i+t}$ and $F(F(x))_{i+t}$. I.e., if $F(F(x))_{i+t} = F(x)_{i+t}$ for any $t \in T$, then it follows that $F(x)_{i+t} = x_{i+t}$ for any $t \in T$.
		
		(c) Suppose $F(F(x))_i \neq F(x)_i$. Just like in (a), it follows that $F(F(x))_{i+t} = F(x)_{i+t}$ for any $t \in T$. By (b), it follows that $F(x)_{i+t} = x_{i+t}$ for any $t \in T$. And like in (a), we conclude that $F(x)_i \neq x_i$.
		
		Combining (a) and (c) shows that $F(x)_i = x_i$ if and only if $F(F(x))_i = F(x)_i$.
	\end{proof}
	
	\begin{example}
		Again, let $f\colon\F_2^4\to\F_2$ be the function given by $f(x)=x_2\oplus x_1(x_3\oplus 1)x_4$. Then $j=2$, and for $t=-1$ we get that $x_1(x_3\oplus 1)x_4 \cdot x_4(x_2\oplus 1)x_3 = 0$, and similarly for $t=1,2$, which gives a way of showing that $f$ is indeed a proper lifting.
	\end{example}

	Cellular automata on a bit sequence can be given by one or more possible shapes of some nearby bits, called landscapes (cf.~\cite{toffoli-margolus}). A landscape is a string of symbols $\epsilon_{1-s} \dotsc \epsilon_{-1} \star \epsilon_1 \dotsc \epsilon_{k-s}$, where $\epsilon_i \in \{0, 1, -\}$ for each $i$, $\epsilon_{1-s} \in \{0, 1\}$ and $\epsilon_{k-s} \in \{0, 1\}$. A bit $x_0$ is inverted if there is a landscape for which $x_i = \epsilon_i$ for each $i$ for which $\epsilon_i \in \{0, 1\}$. In the case with one single landscape, this corresponds to a Boolean function $f$ of diameter $k$ given by $$f(x) = x_s \oplus \prod_{i|\epsilon_i \in \{0, 1\}} (x_{s+i} \oplus \epsilon_i \oplus 1),$$
	which we call a \emph{primitive landscape} function.
	
	\begin{remark}
		Usually the indexing of the variables of a Boolean function starts from 1, but for landscapes it is standard notation that the variable corresponding to $\star$ is indexed 0. When dealing with landscapes and compositions of Boolean functions (confer Definition~\ref{composition}) it is often convenient to shift the indexing (i.e., apply $x_i \leftarrow x_{i + 1}$ or $x_i \leftarrow x_{i - 1}$  an appropriate number of times), and we have already seen this in the proof of Proposition~\ref{shift-product}.
	\end{remark}
	
	The following corollary is a generalization of Theorem 7.4 in \cite{OS-iacr}.
	
	\begin{corollary}\label{primitive-landscapes}
		
		Suppose a cellular automaton is given by a primitive landscape $\epsilon_{1-s} \dotsc \epsilon_{-1} \star \epsilon_1 \dotsc \epsilon_{k-s}$, and that for each $d_1$ such that the $d_1$th symbol is either 0 or 1, there exists $d_2$ such that the $d_2$th and the $(d_1+d_2)$th symbol of the landscape consist of 0 and 1 (in some order). Then the cellular automaton is invertible, and the corresponding Boolean function is a proper lifting.
	\end{corollary}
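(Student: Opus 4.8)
The plan is to reduce the statement to Proposition~\ref{shift-product} by taking $j=s$, so that $x_j$ is the variable $x_s$ corresponding to the $\star$ at position $0$ of the landscape. With this choice, $f(x_1,\dots,x_k)\oplus x_s$ is exactly the product $g(x)=\prod_{i\mid\epsilon_i\in\{0,1\}}(x_{s+i}\oplus\epsilon_i\oplus 1)$, and since the index $0$ is excluded from the product (the $\star$ symbol is neither $0$ nor $1$), $g$ does not depend on $x_s$. Thus the first hypothesis of Proposition~\ref{shift-product} holds automatically, and it remains to verify the vanishing condition $(f(x_1,\dots,x_k)\oplus x_j)(f(x_{1+t},\dots,x_{k+t})\oplus x_{j+t})=0$ for every $t$ in the set $T$ of indices on which $g$ depends.

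The key observation is to read $g$ as a pattern indicator. Writing $I=\{i\neq 0 : \epsilon_i\in\{0,1\}\}$, the factor $x_{s+i}\oplus\epsilon_i\oplus 1$ equals $1$ precisely when $x_{s+i}=\epsilon_i$, so $g(x)=1$ if and only if $x_{s+i}=\epsilon_i$ for all $i\in I$; in particular $T=I$. For $t\in T$, the shifted term $f(x_{1+t},\dots,x_{k+t})\oplus x_{j+t}$ is the indicator that $x_{s+i+t}=\epsilon_i$ for all $i\in I$. Hence the product of the two indicators is identically zero if and only if the two systems of equalities are jointly unsatisfiable, which happens exactly when some bit position is forced to take conflicting values, that is, when there exist $i,i'\in I$ with $s+i=s+i'+t$ and $\epsilon_i\neq\epsilon_{i'}$.

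Finally, I would match this with the hypothesis of the corollary. Rewriting the conflict as $i=i'+t$ with $\{\epsilon_i,\epsilon_{i'}\}=\{0,1\}$ and setting $d_1=t$, $d_2=i'$, one sees that the required $i,i'$ exist for a given $t\in T$ precisely when there is a $d_2$ such that the $d_2$th and $(d_1+d_2)$th symbols of the landscape are $0$ and $1$ in some order---which is exactly what the hypothesis guarantees for each $d_1$ with $\epsilon_{d_1}\in\{0,1\}$. Applying Proposition~\ref{shift-product} then shows $f$ is a $(k,n)$-lifting for all $n\geq k$, hence a proper lifting, and by the equivalence with reversibility the automaton is invertible. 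I expect the only delicate point to be the bookkeeping between the landscape indexing (centered at $0$) and the variable indexing of $f$ (running from $1$ to $k$), together with the clean reformulation of \emph{the product of two pattern indicators vanishes identically} as \emph{the two patterns conflict on a common position}; once these are set up correctly, the equivalence with the corollary's hypothesis is immediate.
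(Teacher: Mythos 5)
Your proposal is correct and follows essentially the same route as the paper: both reduce to Proposition~\ref{shift-product} with $j=s$, identify $T$ with the set of positions where the landscape reads $0$ or $1$, and derive the vanishing condition from the forced conflict at position $s+d_1+d_2$, where the unshifted product contributes the factor $x_{s+d_1+d_2}\oplus\epsilon_{d_1+d_2}\oplus 1$ and the shifted one contributes $x_{s+d_1+d_2}\oplus\epsilon_{d_2}\oplus 1$. The paper phrases this algebraically (each expression is a multiple of the relevant factor, and the two factors in the same variable with distinct $\epsilon$-values cannot both be $1$), while you phrase it via pattern indicators conflicting on a common bit, but the content is identical.
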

	
	\begin{proof}
		
		Let $f$ be the corresponding Boolean function, and let $d_1 \in T$, as in the proof of Proposition~\ref{shift-product}. By the assumption of the corollary, there exists $d_2$ such that $f(x_1, \dotsc, x_k) \oplus x_s$ is a multiple of both $(x_{s+d_2} \oplus \epsilon_{d_2} \oplus 1)$ and $(x_{s+d_1+d_2} \oplus \epsilon_{d_1+d_2} \oplus 1)$. For the former factor, we can substitute the $x$'s by adding $d_1$ to each index and find that $f(x_{1+d_1}, \dotsc, x_{k+d_1}) \oplus x_{s+d_1}$ is a multiple of $(x_{s+d_1+d_2} \oplus \epsilon_{d_2} \oplus 1)$. Furthermore, the factors $(x_{s+d_1+d_2} \oplus \epsilon_{d_2} \oplus 1)$ and $(x_{s+d_1+d_2} \oplus \epsilon_{d_1+d_2} \oplus 1)$ have distinct values, so that one of them is 0. Thus, $(f(x_1, \dotsc, x_k) \oplus x_s) (f(x_{1+d_1}, \dotsc, x_{k+d_1}) \oplus x_{s+d_1}) = 0$ is satisfied and the conclusion follows from Proposition~\ref{shift-product}.
	\end{proof}
	
	It follows from Proposition~\ref{shift-product} that the primitive landscapes arising from the Corollary~\ref{primitive-landscapes} all satisfy $F^2=I$, and these are called ``conserved landscapes'' in \cite{toffoli-margolus}.
	
	\begin{remark}\label{elementary}
		Note that two strings of symbols yield elementary equivalent functions, as defined in \cite[Definition~1.10]{HO-1} if one is obtained by reading the other one backwards, and/or by replacing each $0$ and $1$ in the other one by its inverse.
		
		For $k=4$, there is, up to elementary equivalence, only one proper lifting, described by any of the four following primitive landscapes (where the first one is the Patt function):
		\[
		(0 \star 10), \quad
		(1 \star 01), \quad
		(01 \star 0), \quad
		(10 \star 1).
		\]
		Computer experiments using Corollary~\ref{primitive-landscapes} give the following:
		\medskip
		\begin{center}
			\begin{tabular}{|c|c|c|}\hline
				$k$ & No.\ of primitive landscapes & No.\ of equivalence classes \\ \hline
				4 & 4 & 1 \\ \hline
				5 & 14 & 4 \\ \hline
				6 & 72 & 18 \\ \hline
				7 & 288 & 73 \\ \hline
				8 & 1160 & 290 \\ \hline
				9 & 4376 & 1100 \\ \hline
				10 & 16776 & 4194 \\ \hline
				11 & 60646 & 15176 \\ \hline
				12 & 219344 & 54836 \\ \hline
				13 & 775930 & 194047 \\ \hline
				14 & 2724072 & 681018 \\ \hline
				15 & 9394778 & 2348878 \\ \hline
				16 & 32291160 & 8072790 \\ \hline
				17 & 109326972 & 27332464 \\ \hline
				18 & 368586536 & 92146634 \\ \hline
			\end{tabular}
		\end{center}
		
	\end{remark}

	\begin{definition}\label{composition}
		If $f$ and $g$ are two Boolean functions of diameter $k_f$ and $k_g$, respectively, inducing shift-invariant $F$ and $G$ for some $n\geq\max\{k_f,k_g\}$, then $G\circ F$ is shift-invariant and induced from the function $$(g\circ f)(x)=g\left( f(x_1, \dotsc, x_{k_f}), \dotsc, f(x_{k_g}, \dotsc, x_{(k_f + k_g - 1)}) \right)$$ with indices shifted so that $x_1$ is the first one on which is depends. Thus, the diameter $k$ of $g \circ f$ satisfies $k \leq k_f + k_g - 1$.
	\end{definition}

	\begin{example}
		Suppose $f(x) = x_2 \oplus x_1 (x_3 \oplus 1) x_4$ and $g(x) = x_2 \oplus x_1 (x_3 \oplus 1) (x_4 \oplus 1) x_5$, which are proper liftings, and let $x \in \F_2^8$. Then $$g\left( f(x_1, x_2, x_3, x_4), \dotsc, f(x_5, x_6, x_7, x_8) \right)$$ is equal to $$h(x_1, \dotsc, x_8) = f(x_2, x_3, x_4, x_5) \oplus f(x_1, x_2, x_3, x_4) (f(x_3, x_4, x_5, x_6) \oplus 1) \dotsc$$ Suppose we alter $x_1$. Then $h(x)$ can only change if the factor $f(x_1, x_2, x_3, x_4)$ on the right hand side changes. This is only possible if $x_3 = 0$ and $x_4 = 1$. But in that case, the second factor $f(x_3, x_4, x_5, x_6) \oplus 1$ is equal to zero. Thus, $h(x)$ does not change, and is independent of $x_1$. Similarly, $h(x)$ is independent of $x_7$ and $x_8$, and is in fact equal to $x_3 \oplus x_2 (x_5 (x_4 \oplus 1) \oplus (x_5 \oplus 1) x_6 (x_3 \oplus x_4 \oplus 1))$. The shift $x_i \leftarrow x_{i+1}$ yields the function $g \circ f = x_2 \oplus x_1 (x_4 (x_3 \oplus 1) \oplus (x_4 \oplus 1) x_5 (x_2 \oplus x_3 \oplus 1))$ of diameter 5.
	\end{example}
	
	
	
	If $f$ is a $(k, n)$-lifting and $m \geq k$ is a divisor of $n$, then $f$ is a $(k, m)$-lifting (confer \cite{JDA-thesis}, Proposition 6.1). It follows that if $f$ is a $(k, n)$-lifting for all sufficiently large $n$, then $f$ is a $(k, n)$-lifting for all $n \geq k$.
	
	\begin{proposition}
		Suppose $f$ and $g$ are proper liftings. Then $g \circ f$ is also a proper lifting.
	\end{proposition}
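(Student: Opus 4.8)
The plan is to reduce the statement to the elementary fact that a composition of bijections is a bijection, once we recognise that $g \circ f$ induces exactly the composite shift-invariant map. By Definition~\ref{composition}, if $F$ and $G$ denote the shift-invariant functions on $\F_2^n$ induced by $f$ and $g$, then $G \circ F$ is shift-invariant and is precisely the map induced by $g \circ f$. So the entire content is to transport bijectivity of $F$ and $G$ to $G \circ F$, and then to account correctly for the diameter.

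I would carry it out as follows. Fix $n \geq k_f + k_g - 1$. Since $f$ is a proper lifting and $n \geq k_f$, the induced map $F\colon\F_2^n\to\F_2^n$ is a bijection; likewise $G\colon\F_2^n\to\F_2^n$ is a bijection because $g$ is a proper lifting and $n \geq k_g$. Hence $G \circ F$ is a bijection on $\F_2^n$. As just noted, $G \circ F$ is the shift-invariant map induced by $g \circ f$, whose diameter $k$ satisfies $k \leq k_f + k_g - 1 \leq n$, so this says exactly that $g \circ f$ is a $(k, n)$-lifting. Thus $g \circ f$ is a $(k, n)$-lifting for every $n \geq k_f + k_g - 1$, that is, for all sufficiently large $n$.

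To finish, I would invoke the reduction recorded just before the statement (from \cite{JDA-thesis}): a function of diameter $k$ that is a $(k, n)$-lifting for all sufficiently large $n$ is a $(k, n)$-lifting for every $n \geq k$. Applying this to $g \circ f$ yields that it is a proper lifting. Alternatively, one can argue directly on the bi-infinite configuration space: by \cite[Theorem~7]{kari-survey}, proper liftings are exactly the local rules whose cellular automaton $\F_2^\Z \to \F_2^\Z$ is bijective; the composite cellular automaton $G \circ F$ on $\F_2^\Z$ is then bijective and has local rule $g \circ f$, so $g \circ f$ is a proper lifting. This second route avoids the diameter bookkeeping entirely.

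The only genuinely delicate point is bookkeeping rather than mathematics: the diameter $k$ of $g \circ f$ may be strictly smaller than $k_f + k_g - 1$, since cancellations can occur (the example following Definition~\ref{composition} composes diameters $4$ and $5$ into diameter $5$ rather than $8$). Consequently a direct argument only exhibits the lifting property for $n \geq k_f + k_g - 1$, and not immediately for every $n \geq k$. I expect this to be the main thing to handle with care, and it is precisely what the ``sufficiently large $n$'' reduction (or, equivalently, the bi-infinite characterisation) is designed to settle; the verification that $G \circ F$ coincides with the map induced by $g \circ f$ is immediate from Definition~\ref{composition}.
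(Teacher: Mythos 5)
Your proposal is correct, and in fact it contains the paper's argument as your ``alternative'' route: the paper's entire proof is the one‑line observation that $F$ and $G$ are reversible cellular automata on $\F_2^\Z$, hence so is $G\circ F$, hence $g\circ f$ is a proper lifting via the equivalence from \cite[Theorem~7]{kari-survey}. Your primary, worked‑out argument is genuinely different and more elementary: you stay on finite $\F_2^n$, compose the two bijections for $n\geq k_f+k_g-1$, and then close the gap $k\leq n<k_f+k_g-1$ (which arises because the diameter of $g\circ f$ can drop below $k_f+k_g-1$) by the divisor reduction recorded just before the proposition. What each approach buys: the bi‑infinite route is shorter and sidesteps the diameter bookkeeping entirely, but outsources the real content to Kari's theorem; your finite route is self‑contained modulo the divisor lemma and makes explicit exactly the subtlety (small $n$ relative to the nominal composite diameter) that the paper's one‑liner never mentions. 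Both are complete proofs; your write‑up is arguably the more careful of the two.
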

	
	\begin{proof}
		Since both the induced shift-invariant functions $F$ and $G$ are reversible cellular automata, it follows that $G \circ F$ is a reversible cellular automaton, and that $g \circ f$ is a proper lifting.
	\end{proof}
	
	This allows us to construct new proper liftings as compositions of the ones we obtain from Corollary~\ref{primitive-landscapes}. The liftings in \cite[Appendix~C]{HO-1} can be written in the notation of Corollary~\ref{primitive-landscapes} as follows:
	
	\[
	\begin{array}{r c l}
		x_2 \oplus x_1 (x_3 \oplus 1) x_4 &=& 1 \star 01\\
		x_2 \oplus x_1 x_3 (x_4 \oplus 1) (x_5 \oplus 1) &=& 1 \star 100\\
		x_2 \oplus x_1 (x_3 \oplus 1) (x_4 \oplus 1) x_5 &=& 1 \star 001\\
		x_2 \oplus x_1 (x_4 (x_3 \oplus 1) \oplus (x_4 \oplus 1) x_5 (x_2 \oplus x_3 \oplus 1)) &=& (1 \star 001) \circ (1 \star 01)\\
		x_3 \oplus x_1 x_2 (x_4 \oplus 1) x_5 &=& 11 \star 01\\
		x_3 \oplus x_1 (x_2 \oplus 1) x_4 (x_5 \oplus 1) &=& 10 \star 10
		
	\end{array}
	\]
	
	In \cite{JDA-thesis}, Daemen consider shift-invariant functions defined by multiple landscapes, given as a set. In this case, a bit flips if it is the origin of at least one landscape in the set. Every shift-invariant function defined by this set-wise construction can also be defined through the composition as given above. To illustrate how this works, one can check that
	$0 \star 110 \vee 10 \star 10$ corresponds to $0 \star 110 \circ 10 \star 10$. Furthermore, $0 \star 10 \vee 0--\star10 \vee 0----\star10$ corresponds to $(0-1-1\star10) \circ (0-1\star10) \circ (0\star10)$.
	On the other hand, for a function $f$ to be defined by a set of landscapes it is necessary that $f(x)\oplus x_s$ is independent of $x_s$ for some $s$, which is not the case for the fourth function in the above list, i.e., the one given by a composition.
	

	\section{Proper liftings of diameter 6}
	
	For $k=6$, it is possible to find all functions $f:\F_2^k \to \F_2$ for which $F(x)$ given by $F(x)_i = f(x_{i-s+1}, \dotsc, x_{i-s+k})$ for some $s$ satisfies $F(F(x))=x$, even though the total number $\binom{64}{32}$ of balanced functions is too large to handle by simple means.
	
	\begin{proposition}\label{primitive-6}
		There are 40 elementary equivalence classes of functions $f:\F_2^6 \to \F_2$ of diameter 6 for which $F(x)$ given by $F(x)_i = f(x_{i-s+1}, \dotsc, x_{i-s+6})$ for some $2\leq s\leq 5$ satisfies $F(F(x))=x$ (152 functions with no constant term).
	\end{proposition}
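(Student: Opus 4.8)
The plan is to recast the involution property $F^2=I$ as a finite system of Boolean constraints on the truth table of $f$, and then to enumerate all solutions of that system directly, rather than testing balanced functions one at a time. Writing the self-referential index as $s$ and setting $c(y)=f(y)\oplus y_s$, the induced map reads $F(x)_i=x_i\oplus c(x_{i-s+1},\dots,x_{i-s+6})$. Abbreviating $c_i(x)=c(x_{i-s+1},\dots,x_{i-s+6})$, a one-line computation gives
\[
F(F(x))_i=x_i\oplus c_i(x)\oplus c_i(F(x)),
\]
so that $F^2=I$ is \emph{equivalent} to the local identity $c_i(F(x))=c_i(x)$ for all $x$ and all $i$. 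By shift-invariance it suffices to impose the case $i=0$; since $c_0(F(x))$ depends only on $x_{2-2s},\dots,x_{12-2s}$, i.e.\ on $11$ consecutive bits, this is an identity of Boolean functions in $11$ variables. Hence the condition is the same for every $n\geq 6$ and on $\F_2^\Z$, and it amounts to $2^{11}$ scalar equations.

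The crucial observation is that these $2^{11}$ equations involve only the $2^6=64$ truth-table entries of $c$ as unknowns, so the task is not a walk through $\binom{64}{32}$ functions but the determination of all solutions of a fixed Boolean system in $64$ binary variables. Each window assignment produces one equation of the form $c[a]=c[a\oplus\varphi(c)]$, where the index $a$ is fixed by the window and the flip vector $\varphi(c)=\big(c_{1-s}(x),\dots,c_{6-s}(x)\big)$ is itself assembled from six further entries of $c$, because $F(x)_j=x_j\oplus c_j(x)$. I would solve this system by backtracking with constraint propagation: branch on undetermined entries of $c$, and as soon as enough entries are fixed that a flip vector $\varphi(c)$ becomes determined, collapse the corresponding equation to a relation between two concrete entries and propagate. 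Because most assignments force many entries, the live search tree stays far below $2^{64}$, which is exactly what lets us avoid the infeasible enumeration of all balanced functions. I would run this for each of the four positions $s\in\{2,3,4,5\}$ named in the statement, discard solutions with a nonzero constant term or of diameter $<6$, and take the union over $s$ to obtain the $152$ functions.

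Finally I would reduce this list modulo elementary equivalence. The relevant group has order $4$, generated by reversal of the variable order (which exchanges the position $s$ with $7-s$) and by complementation of all $0/1$ values, in the sense of the Definition~1.10 cited in the excerpt. Counting orbits, the number of classes equals the average number of functions fixed by the four group elements; since the group has order $4$ this is at least $152/4=38$, and the excess to $40$ is accounted for precisely by the functions stabilized by reversal or by the composite symmetry. Checking the two numbers $152$ and $40$ against the realized orbit sizes then serves as an internal consistency test on the search.

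The main obstacle is completeness and correctness of the pruned search rather than raw running time: one must be certain that propagation never discards a branch that still admits a solution, and in particular that the \emph{cancelling} solutions survive — those $f$ for which two or more bits inside a window flip simultaneously and their combined effect on $c_i$ cancels. These are exactly the involutions not covered by the conserved-landscape criterion of Corollary~\ref{primitive-landscapes}, and they are the reason the sufficient condition of Proposition~\ref{shift-product} cannot be used as a filter. A secondary point worth isolating as a lemma is whether $F^2=I$ forces $c$ to be independent of the self-bit $y_s$; if so, the unknowns drop from $64$ to $32$ and the search contracts accordingly, but I would treat this as an optimization to be proved rather than assumed.
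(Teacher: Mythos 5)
Your reduction of $F^2=I$ to the single local identity $c_0(F(x))=c_0(x)$ in $11$ free bits is correct, and the resulting constraint system on the $64$ truth-table entries of $c$ does characterize exactly the functions in question, so your route is sound --- but it is genuinely different from the paper's. The paper does not attack the truth table directly: it uses the fact that $F$ commutes with rotation and therefore induces an involution on the rotation classes of binary sequences of each primitive period $p$. Enumerating these class-involutions for $p\le 5$ (there are $2,2,4,32,3076$ of them, counted by $\binom{r}{2i}(2i-1)!!\,p^i$ with an extra factor $2^{r-2i}$ when $p$ is even) pins down $f$ on the $44$ words of $\F_2^6$ that occur as windows of a sequence of period $\le 5$; after discarding inconsistent combinations only $4296$ (for $s=2$) resp.\ $4564$ (for $s=3$) partial truth tables survive, each leaving $2^{20}$ completions to test, and $s=4,5$ follow from $s=3,2$ by reversal. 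What that decomposition buys is that feasibility is guaranteed by explicit counts before any search is run; what your formulation buys is uniformity (one fixed system of $2^{11}$ equations in $64$ Boolean unknowns, independent of any period bookkeeping). The one unverified point in your plan is precisely the empirical one: that propagation keeps the live tree far below $2^{64}$. This is plausible --- a SAT solver would very likely confirm it --- but it is the step you must demonstrate rather than assert, since it plays the same role that the periodic-orbit analysis plays in the paper. Your Burnside check is consistent with the paper's breakdown ($20+56$ functions for $s=2,3$, doubled by reversal to $152$, in $10+30=40$ classes), and you are right to treat the possible independence of $f(y)\oplus y_s$ from $y_s$ only as an optimization to be proved: the paper's argument does not assume it either.
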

	
	\begin{proof}
		Suppose $x$ is a binary sequence of primitive period $p$. $F(x)$ is then also a binary sequence of period $p$, and since $F$ is a bijection, the primitive period must be equal to $p$. Let $p$ be a positive integer. The number of binary sequences with primitive period $p$ is given by $b_p = \sum_{d|p} \mu(d) 2^{p/d}$ (cf. \cite{OEIS}). There are $r=\frac{b_p}{p}$ classes of such sequences, up to rotation. $F$ preserves the equivalence relation, and we can write $F(C) = C'$ as a shorthand notation for $x \in C \implies F(x) \in C'$.
		
		Let $C$, $C'$ be distinct classes of binary sequences of primitive period $p$. If $F(C) = C'$, then since $F(F(x)) = x$, it follows that $F(C') = C$. Let $i$, $0 \leq i \leq \frac{r}{2}$ denote the number of pairs of distinct classes $(C, C')$ for which $F(C) = C'$ and $F(C') = C$. If $i$ is fixed and $p$ is odd, then the number of different possible mappings restricted to binary sequences with primitive period $p$ is $\binom{r}{2i} (2i-1)!! p^i$. Here, $(2i-1)!!$ denotes $1 \times 3 \times 5 \times \dotsc \times (2i-1)$. If $p$ is even, suppose $F(C) = C$. Then for all $x \in C$, $F(x)$ could be either $x$, or $x$ shifted $\frac{p}{2}$ places. This yields an additional factor $2^{r-2i}$. Summed over the possible $i$, we find that there are 2, 2, 4, 32 and 3076 possible mappings for $p$ = 1, 2, 3, 4 and 5, respectively (actually only 1 for $p=1$, when we assume that $f(0)=0$). By combining these, we get the values of $f(x)$ for the $x \in \F_2^6$ that are part of a binary sequence of period $\leq 5$ (specifically, the 44 6-bit words for which the first $j$ bits are equal to the last $j$ bits, for at least one $j \in \{1, 2, 3\}$). There are $2 \times 4 \times 32 \times 3076=787,456$ combinations to be considered, but after eliminating collisions, there are 4296 possibilities for $s=2$ and 4564 possibilities for $s=3$. The solutions for $s$ = 4 or 5 are equivalent to those for $s$ = 3 or 2, respectively. Trying out the possible function values for the remaining 20 6-bit words $x$ is manageable with computer aid. It turns out that there are 20 functions of diameter 6 in 10 equivalence classes for $s=2$, and 56 functions in 30 equivalence classes for $s=3$.
	\end{proof}
	
	We have found 120 elementary equivalence classes (472 functions with no constant term) by using the functions from Corollary~\ref{primitive-landscapes} of diameter $\leq 6$ as generators. They are described in Appendix~\ref{appendix:landscapes} below. As they contain all the functions referred to in Proposition~\ref{primitive-6}, we are led to believe that the list is complete.
	
	\begin{conjecture}
		There are exactly 120 elementary equivalence classes of proper liftings of diameter 6 and degree $\geq 2$.
	\end{conjecture}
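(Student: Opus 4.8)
The plan is to establish a lower bound and an upper bound separately and to match them at $120$. The lower bound is already in hand: the $120$ classes are produced as compositions of the primitive-landscape liftings of diameter $\leq 6$ from Corollary~\ref{primitive-landscapes}, and by the proposition that a composition of proper liftings is again a proper lifting, each is genuinely a proper lifting of diameter $6$; since they were found pairwise inequivalent and of degree $\geq 2$, there are at least $120$ classes. All of the difficulty is therefore in the upper bound: showing that no further proper lifting of diameter $6$ and degree $\geq 2$ exists.

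The first step is to reduce the infinite defining condition to a finite, certifiable one. Being a proper lifting is equivalent to reversibility of the induced cellular automaton on $\F_2^\Z$ (\cite[Theorem~7]{kari-survey}), and reversibility of a one-dimensional automaton of diameter $6$ is decidable by inspecting the de Bruijn graph on $2^{5}=32$ nodes: one checks on the associated product graph that no two distinct bi-infinite configurations share an image. Equivalently, combining this with the divisor reduction of \cite{JDA-thesis} quoted before the composition proposition, it suffices to verify that $F_n$ is bijective for all $n$ up to an explicit bound $N=N(6)$. This turns ``proper lifting'' into a finite test that can be applied to each candidate $f$.

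The second step is to cut down the $2^{64}$-sized space of Boolean functions on six bits. Here I would follow the periodic-sequence analysis in the proof of Proposition~\ref{primitive-6}: for a proper lifting, $F$ must restrict to a bijection of the binary sequences of each primitive period $p$, which constrains the values of $f$ on the $44$ short-period words (those whose first $j$ bits equal their last $j$ bits for some $j\in\{1,2,3\}$) to realize a genuine permutation of the period classes for $p\le 5$. The crucial difference from Proposition~\ref{primitive-6} is that we no longer impose $F^2=I$, so for each $p$ one must allow an arbitrary shift-equivariant permutation of the $r=b_p/p$ rotation classes rather than only involutions; this enlarges the candidate set on the short-period words. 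After fixing each such pattern, the remaining $20$ words are enumerated over all $2^{20}$ assignments, every completed $f$ is submitted to the reversibility test above, the survivors are grouped under the elementary equivalence of Remark~\ref{elementary} (reversal and bit-complementation) and counted, with the degree-$1$ affine cases separated out by hand; the claim is that exactly $120$ classes of degree $\geq 2$ remain.

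The main obstacle is the combinatorial blow-up in the second step. Dropping the involution hypothesis replaces the manageable count of involutive period-class pairings used in Proposition~\ref{primitive-6} by the full set of shift-equivariant permutations, so the number of admissible short-period patterns, and hence the number of $2^{20}$ sweeps, grows substantially, and one must argue that the enlarged search is still both feasible and provably exhaustive. A secondary difficulty is foundational: one must justify rigorously that the de Bruijn / bounded-period test certifies the genuinely infinite condition defining a proper lifting, so that the computation constitutes a proof rather than strong evidence. A cleaner but much harder alternative would be a structural theorem asserting that every diameter-$6$ proper lifting decomposes as a composition of primitive-landscape liftings of diameter $\le 6$; I do not see how to establish this directly, and it is essentially as strong as the conjecture itself, so the verified-computation route appears to be the realistic path.
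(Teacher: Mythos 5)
The statement you are asked to prove is labelled a \emph{conjecture} in the paper: the authors give no proof, only evidence (the $120$ classes obtained from compositions of primitive-landscape liftings contain all $40$ classes of the involutive case settled in Proposition~\ref{primitive-6}, which is why they ``are led to believe that the list is complete''). Your lower bound is exactly the paper's content and is fine. But your upper bound is a research plan, not a proof: the entire burden is deferred to an enumeration that you neither carry out nor show to be feasible or exhaustive. This is precisely the obstacle the paper itself flags --- it states that the space of balanced functions on six bits is ``too large to handle by simple means'' and for that reason only resolves the subcase $F\circ F=\mathrm{id}$. Dropping the involution hypothesis replaces the $2\times 4\times 32\times 3076$ involutive period-class pairings by all shift-equivariant permutations of the period-$p$ classes (for $p=5$ alone this is $5\cdot6\cdot7\cdot8\cdot9\cdot10=151200$ rather than $3076$), and each surviving pattern still requires a $2^{20}$ sweep over the long-period words followed by a reversibility test. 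You correctly identify this blow-up as ``the main obstacle'' and leave it unresolved, which means the proposal does not establish the statement.

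A secondary, fixable imprecision: the finite certificate for ``proper lifting'' cannot come from the divisor reduction you cite (that only transfers bijectivity from $n$ to divisors of $n$, and gives no bound $N(6)$ beyond which checking may stop). The correct justification is the one you mention first: by \cite[Theorem~7]{kari-survey} properness is equivalent to injectivity of the CA on $\F_2^{\Z}$, and the latter is decidable by the Amoroso--Patt / de~Bruijn product-graph test. If you rewrite the argument, rest the finiteness of the test entirely on that, and treat the divisor reduction as irrelevant to the bound. Even with that repaired, the statement remains open until the enlarged search is actually executed (or a structural decomposition theorem of the kind you mention in your last paragraph is proved).
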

	
	\medskip
	
	Recall that the differential uniformity of a function $F\colon\F_2^n\to\F_2^n$ is given by
	\[
	\operatorname{DU}(F)=\max_{a,b\in\F_2^n,\, a\neq 0} \lvert\{ x\in\F_2^n : F(x \oplus a) \oplus F(x)=b \}\rvert.
	\]
	For a Boolean function $f\colon\F_2^k\to\F_2$, set
	\[
	\operatorname{DU}(f)=\max_{n\geq k} \, \frac{1}{2^n} \operatorname{DU}(F).
	\]
	Below, for each pair of diameter~$k$ and degree, we list the unique function with the lowest value of $\max_{k\leq n\leq 12} 2^{-n} \operatorname{DU}(F)$, and to avoid fractions, the numbers given for each $n$ are for $2^{9-n}\cdot\operatorname{DU}(F)$:
	\[
	\begin{array}{|c|c|c|c|c|c|c|c|c|c|c|c|c|} \hline
		\text{Function} & \text{k} & \text{deg} & n=4 & 5 & 6 & 7 & 8 & 9 & 10 & 11 & 12
		\\ \hline
		(0 \star 10) & 4 & 3 & 192 & 224 & 240 & 216 & 216 & 216 & 216 & 216 & 216
		\\ \hline
		(0 \star 110) \circ (0 \star 10) & 5 & 4 &  - & 128 & 144 & 144 & 136 & 132 & 132 & 132 & 132
		\\ \hline
		(0-\star100)\circ(0-\star110) & 6 & 3 & - & - & 192 & 224 & 224 & 216 & 240 & 216 & 216
		\\ \hline
		(00\star10)\circ(0\star110)\circ(0\star10) & 6 & 4 & - & - & 144 & 128 & 132 & 120 & 117 & 117 & 117
		\\ \hline
		(0\star10)\circ(0\star110)\circ(01\star00) & 6 & 5 &  - & - & 80 & 104 & 84 & 72 & 72 & 72 & 72
		\\ \hline
	\end{array}
	\]
	
	\begin{question}
		For every diameter and degree, what is the lowest value of $\operatorname{DU}(f)$?
	\end{question}
	
	\begin{question}
		Does there exist a proper lifting of degree~2? We have checked that no composition of two proper liftings of degree $\leq 6$ give a degree~$2$ function.
		
		Note that, by Proposition~\ref{expand}, the Patt function $(0 \star 10)$ can be expanded to proper liftings $(0 - \star - 1 - 0)$ for arbitrary large lengths of $-$, that is, there exist proper liftings of degree~3 of arbitrary large diameter.
	\end{question}
	
	
	
	

	\section{Other constructions}

	\begin{proposition}
		For integers $k$ and $j$ satisfying $2 \leq j \leq \frac{k}{2}$, let $\Xi=k+1-2j$, and let $S$ be a symmetrical subset of $\{1, 2, \dotsc, k\}$ (i.e., $l \in S$ if and only if $k + 1 - l \in S$) such that $1 \in S$ and $j \not \in S$. Suppose $S$ contains an integer $t$ such that $t \equiv j(\operatorname{mod} \Xi)$. Then $f:\F_2^k \to F_2$ given by $$f(x)=x_j \oplus (x_{k+1-j} \oplus 1) \prod_{l \in S} x_l$$ is a $(k, n)$-lifting for any $n \geq k$.
	\end{proposition}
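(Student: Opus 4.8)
The plan is to show that the induced shift-invariant map $F\colon\F_2^n\to\F_2^n$ is bijective for every $n\ge k$, starting from the involution criterion of Proposition~\ref{shift-product} and its landscape reformulation in Corollary~\ref{primitive-landscapes}. Writing $g(x)=f(x)\oplus x_j=(x_{k+1-j}\oplus 1)\prod_{l\in S}x_l$, the first thing I would check is that $g$ does not depend on $x_j$: this holds because $j\notin S$ and, since $2\le j\le k/2$, we have $k+1-j>j$, so $k+1-j\ne j$, while the symmetry of $S$ forces $k+1-j\notin S$. Because $1\in S$ (hence $k\in S$ by symmetry), $f$ genuinely depends on $x_1$ and $x_k$, so it has diameter $k$. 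In landscape terms $f$ has its star at position $j$, a single $0$ at the mirror position $\Xi=k+1-2j$, and $1$'s at the symmetric set $\{l-j:l\in S\}$, and $T=\{\Xi\}\cup\{l-j:l\in S\}$ in the notation of Proposition~\ref{shift-product}.

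Next I would dispatch all offsets coming from the product. For $t=l-j$ with $l\in S$, the symmetry $k+1-l\in S$ produces the required clash: in $g(x)\,g(x_{1+t},\dotsc,x_{k+t})$ the factor forcing $x_{k+1-j}=0$ from the first copy meets the factor forcing $x_{k+1-j}=1$ coming from the index $k+1-l$ of the shifted copy, so the product vanishes. This is exactly the hypothesis of Corollary~\ref{primitive-landscapes} verified for every fixed position $d_1=l-j$ by taking $d_2$ to be the symmetric partner. Hence the only offset not handled automatically is the control offset $t=\Xi$.

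The offset $t=\Xi$ is the crux. A direct computation shows that $g(x)\,g(x_{1+\Xi},\dotsc,x_{k+\Xi})=0$ if and only if $j+2\Xi\in S$ (equivalently, by symmetry, $j-\Xi\in S$), since the only possible clash is between a product factor of one copy and the control factor of the other. When the modular witness $t$ can be taken to be this element, $F$ is an involution and the claim follows at once from Proposition~\ref{shift-product}. In general, however, $F$ need \emph{not} satisfy $F^2=I$, so this offset must be treated separately, and this is exactly where the weaker hypothesis $t\equiv j\pmod\Xi$ must be used.

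For the remaining case I would abandon the involution route and prove directly that the cellular automaton is injective. The usable half of the orthogonality relations already gives a foothold: writing $\phi_i(x)=(x_{i+\Xi}\oplus1)\prod_{l\in S}x_{i+l-j}$, so that $F(x)_i=x_i\oplus\phi_i(x)$, whenever $\phi_i(x)=1$ every product-neighbour of $i$ is left fixed by $F$ and reads $1$ in $y=F(x)$; contrapositively, any cell possessing a product-neighbour that reads $0$ in $y$ satisfies $x_i=y_i$. The flip indicators of the undetermined cells are then coupled only through the control variable, that is, along the arithmetic progressions of common difference $\Xi$ in $\Z/n$. The role of the modular condition is to guarantee that the witness $t\in S$ supplies a product-neighbour lying in the \emph{same} $\Xi$-progression as the cell being updated, which should let one cascade the reconstruction of $\phi$ along each progression. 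I expect the main obstacle to be precisely closing this cascade up around the cycle: proving that the resulting cyclic system has a unique solution for every $n$, so that $\phi(x)$, and hence $x=y\oplus\phi(x)$, is uniquely recoverable from $y$. Once injectivity is secured for all sufficiently large $n$, the observation recalled in Section~1 — that a function which is a $(k,n)$-lifting for all large $n$ is one for all $n\ge k$ — finishes the proof.
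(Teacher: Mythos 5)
Your preliminary analysis is accurate and matches the structure of the problem: the offsets $t=l-j$ with $l\in S$ are indeed killed by the symmetry of $S$ (the control factor of one copy clashes with the product factor at index $k+1-l$ of the other), the only problematic offset is the control offset $\Xi$, the clash there occurs exactly when $j+2\Xi\in S$ (equivalently $j-\Xi\in S$), and in general $F$ is \emph{not} an involution, so Proposition~\ref{shift-product} alone cannot finish the argument. All of this agrees with what actually happens. The problem is your last paragraph: the ``cascade along $\Xi$-progressions'' that is supposed to recover $\phi(x)$ from $y=F(x)$ is not carried out, and you yourself flag the unresolved obstacle --- closing the cyclic system around $\Z/n\Z$ and showing it has a unique solution for every $n$. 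That closure is precisely the entire content of the proposition; without it you have only proved the special case $j+2\Xi\in S$ (or $j-\Xi\in S$), where $F^2=I$. A local propagation argument around a cycle can easily admit two consistent global solutions (this is exactly how non-injective shift-invariant maps fail), so the uniqueness claim cannot be waved through; some global mechanism exploiting $t\equiv j\pmod\Xi$ for general $t$ is required and is missing.

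The paper's proof supplies that mechanism by a different, more computational route: it expands $F(F(x))_i$ directly, uses the symmetry of $S$ to note that every inner product $\prod_{m\in S}x_{i-2j+l+m}$ contains the control variable $x_{i+\Xi}$ as a factor, and after cancelling all terms of the form $x_{i+s}(x_{i+s}\oplus 1)$ finds that $F^2$ has \emph{the same shape} as $F$, namely $F(F(x))_i=x_i\oplus(x_{i+2\Xi}\oplus 1)\prod_{l\in S}x_{i-j+l}\,x_{i+\Xi-j+l}$, with control offset doubled to $2\Xi$ and the product set replaced by a larger symmetric set. Iterating this self-similar squaring gives $F^{(2^r)}(x)_i=x_i\oplus(x_{i+2^r\Xi}\oplus 1)\prod_{l\in S,\,0\leq m\leq 2^r-1}x_{i-j+l+m\Xi}$, and the hypothesis $t\equiv j\pmod\Xi$ is then used exactly once: for suitable $r$ the factor $x_{i+2^r\Xi}$ appears in the product (take $l=t$, $m=2^r-\frac{t-j}{\Xi}$), so the perturbation term vanishes identically and $F^{(2^r)}=I$. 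Bijectivity follows because $F$ has finite order, with no injectivity cascade needed. This iterated-squaring idea is the missing ingredient in your proposal; if you want to salvage your approach you would either need to discover it anyway or genuinely solve the cyclic reconstruction problem, which is not easier.
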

	
	\begin{proof}
		All indices are considered $\operatorname{mod} n$. We prove the equivalent statement that $F\colon\F_2^n\to\F_2^n$ given by $$F(x)_i = x_i \oplus (x_{i+\Xi} \oplus 1) \prod_{l \in S} x_{i-j+l}$$ is a bijection. We have
		\begin{align*}
			F(F(x))_i \text{ }&= x_i \oplus (x_{i+\Xi} \oplus 1) \prod_{l \in S} x_{i-j+l}\\
			&\oplus \left( (x_{i+\Xi} \oplus 1) \oplus (x_{i+2\Xi} \oplus 1) \prod_{l \in S} x_{i+\Xi-j+l} \right) \prod_{l \in S} \left( x_{i-j+l} \oplus (x_{i+\Xi-j+l} \oplus 1) \prod_{m \in S} x_{i-2j+l+m}\right)
		\end{align*}
		Note that all products $\prod_{m\in S}x_{i-2j+l+m}$ contain $x_{i+\Xi}$ as a factor (for $m=k+1-l$). Eliminating all terms containing a factor of the form $x_{i+s} (x_{i+s} \oplus 1)$ yields
		\begin{align*}
			F(F(x))_i \text{ }&= x_i \oplus (x_{i+\Xi} \oplus 1) \prod_{l \in S} x_{i-j+l} \oplus (x_{i+\Xi} \oplus 1) \prod_{l \in S} x_{i-j+l} \oplus (x_{i+2\Xi} \oplus 1) \prod_{l \in S} x_{i-j+l} x_{i+\Xi-j+l}\\
			&= x_i \oplus (x_{i+2\Xi} \oplus 1) \prod_{l \in S} x_{i-j+l} x_{i+\Xi-j+l}
		\end{align*}
		The product $\prod_{l \in S} x_{i-j+l} x_{i+\Xi-j+l}$ can be written as $\prod_{l \in S'} x_{i-j+l}$ where $S'$ is symmetrical ($l \in S'$ if and only if $k'+1-l \in S'$ for $k'=k+\Xi$). We have not used the assumptions that $1 \in S$, $t \in S$ or $j \not \in S$ to derive this expression, and therefore, we can iterate it without having to check whether they are satisfied for $S'$. Thus, $r$ iterations of applying the last new function to itself gives us $$F^{(2^r)}(x)_i = x_i \oplus (x_{i+2^r \Xi} \oplus 1) \prod_{l \in S, \text{ }0 \leq m \leq 2^r-1} x_{i-j+l+m\Xi}$$ Now we can apply the assumptions. Since $1 \in S$ and $j \not \in S$, the diameter of $f$ is $k$. We can assume without loss of generality that $j \leq \frac{k}{2}$ and $t>j$, and set $r=\lceil \operatorname{log}_2 (\frac{t-j}{\Xi}+1)\rceil$. For $l=t$ and $m=2^r-\frac{t-j}{\Xi} \geq 1$ we then get the factor $x_{i-j+l+m\Xi} = x_{i+2^r \Xi}$ in the product on the right, and obtain $F^{(2^r)}(x) = x$.
	\end{proof}
	
	\begin{example}
		For $j=2$, $k=4$, $S=\{1,4\}$, we get $f(x)=x_2\oplus x_1(x_3\oplus 1)x_4$, that is, another way to see this function is a proper lifting.
	\end{example}
	
	\begin{proposition}
		If $r$ is an integer $\geq 2$, then $f:\F_2^{2r} \to \F_2$ given by $$f(x)=x_r \oplus \sum_{j=1}^{r-1} (x_j \oplus 1) (x_{r+j+1} \oplus 1) \left( \prod_{k=1}^j x_{r+k} \right) \left( \prod_{k=j+1}^r (x_k \oplus 1) \oplus \prod_{k=j+1}^r x_k \right)$$ is a $(2r, n)$-lifting for all $n \geq 2r$.
	\end{proposition}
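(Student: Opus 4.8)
The plan is to show that the induced map $F\colon\F_2^n\to\F_2^n$, which I align to the $x_r$-term by writing $F(x)_i=f(x_{i-r+1},\dotsc,x_{i+r})$ with indices mod $n$, is a bijection for every $n\geq 2r$. Unlike the earlier constructions, $F$ is here \emph{not} an involution: for $r=3$, $n=6$ one checks $000100\mapsto001100\mapsto011100\mapsto000100$, a $3$-cycle. Hence Proposition~\ref{shift-product} does not apply, and I will instead give a purely combinatorial description of $F$ and exhibit it as a product of independent permutations.

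First I would pin down exactly when a bit flips. Writing $f(x)=x_r\oplus\sum_{j=1}^{r-1}T_j$, the factor $\prod_{k=j+1}^r(x_k\oplus1)\oplus\prod_{k=j+1}^r x_k$ equals $1$ precisely when the block $x_{j+1},\dotsc,x_r$ is constant, so $T_j=1$ iff $x_j=0$, $\;x_{r+1}=\dots=x_{r+j}=1$, $\;x_{r+j+1}=0$, and $x_{j+1},\dotsc,x_r$ share a common value $b$. Since the pattern $x_{r+1}\cdots x_{r+j+1}=1^{j}0$ determines $j$, at most one $T_j$ is nonzero, and $F(x)_i=x_i$ unless exactly one such $j$ occurs. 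Read in the window centred at $i$, this yields two mutually exclusive pictures. If $b=1$, the window is a run of exactly $r$ ones and $i$ is any of its $r-1$ leftmost cells; thus such a run $0\,1^{r}0$ is carried to $0^{r}1\,0$. If $b=0$, the cell $x_i=0$ sits immediately left of a run of exactly $j\leq r-1$ ones preceded by at least $r-j$ further zeros, and flipping it extends that run leftward by one.

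The key step is to package these local moves globally. I would call a position $i$ with $x_i=1$, $x_{i+1}=0$ a \emph{particle} (the right end of a maximal run of ones) and prove that both moves fix every particle and create no new ones, so $F$ preserves the particle set pointwise. Cutting the cycle at the particles then decomposes every non-constant string (the constant strings $\mathbf 0,\mathbf 1$ being fixed) into independent segments $0^{g}1^{\ell}$ of fixed total length $d=g+\ell$. After verifying that the cells flipped inside a segment are only \emph{read}, never modified, at the two boundaries, one sees that $F$ acts within each segment by $\ell\mapsto\ell+1$ for $1\leq\ell\leq r-1$, by $\ell\mapsto 1$ for $\ell=r$, and by $\ell\mapsto\ell$ for $\ell\geq r+1$ (the latter, and every segment with $d\leq r$, being frozen). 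Each such map is a cyclic permutation of $\{1,\dotsc,r\}$ together with the identity on $\{r+1,\dotsc,d-1\}$, hence a bijection; therefore $F$ is a bijection for all $n\geq 2r$ and $f$ is a $(2r,n)$-lifting.

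The part I expect to be most delicate is the locality claim that makes the segment decomposition rigorous: one must check that the collapse of a length-$r$ run and the leftward extension of a shorter run read, but never alter, the zero at the gap boundary and the bit just past the run's right end, so that adjacent segments evolve independently. Together with disposing carefully of the degenerate cases—runs longer than $r$, gaps shorter than $r-j+1$, and the strings $\mathbf 0$ and $\mathbf 1$—this boundary bookkeeping is where the argument must be done with care.
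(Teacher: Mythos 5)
Your proposal is correct, and its heart---the local flip analysis---is identical to the paper's: both arguments characterize the positions where $F(x)_i\neq x_i$ by the window $0^l1^{r+1-l}0$ (your observation that the suffix pattern $1^j0$ determines $j$, so at most one term of the sum is nonzero, is used implicitly there), both check that the boundary bits of that window are fixed, and both read off the same two moves: a run of length $<r$ with a sufficiently long gap extends one cell to the left, and a run of length exactly $r$ collapses back to a single one at its right end. Where you genuinely diverge is in globalizing this picture. The paper treats the moves as one local cycle $(0,\dotsc,0,1,0)\to(0,\dotsc,0,1,1,0)\to\dotsb\to(0,1,\dotsc,1,0)\to(0,\dotsc,0,1,0)$ of length $r$ and concludes directly that $F^{(r)}(x)=x$, which yields bijectivity together with the order of $F$. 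You instead isolate the conserved quantity that makes such a step legitimate---the set of right ends of maximal $1$-runs---cut the circle there, and exhibit $F$ as a product of independent permutations of the run-length parameter $\ell$ on segments $0^g1^\ell$ of fixed length $d$. The two routes are equivalent in content (each of your segment permutations has order dividing $r$, so $F^{(r)}=I$ follows), but yours makes explicit the independence of the active sites that the paper leaves implicit, at the cost of the boundary bookkeeping you flag; note that this bookkeeping is in fact already contained in your flip characterization, since neither move touches any bit outside its own segment. Your case split is also consistent with the paper's once one notes that a segment with $\ell\le r-1$ advances only when $g\ge r-\ell+1$, i.e.\ when $d\ge r+1$, which is exactly your proviso that segments with $d\le r$ are frozen.
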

	
	\begin{proof}
		All indices are considered $\operatorname{mod} n$. We prove the equivalent statement that $F\colon\F_2^n\to\F_2^n$ given by $$F(x)_i = x_i \oplus \sum_{j=1}^{r-1} (x_{i-r+j} \oplus 1) (x_{i+j+1} \oplus 1) \left( \prod_{k=1}^j x_{i+k} \right) \left( \prod_{k=j+1}^r (x_{i-r+k} \oplus 1) \oplus \prod_{k=j+1}^r x_{i-r+k} \right)$$ is a bijection. Note that the factor $$\prod_{k=j+1}^r (x_{i-r+k} \oplus 1) \oplus \prod_{k=j+1}^r x_{i-r+k}$$ is 1 if $x_i = x_{i-1} = \dotsc = x_{i-r+j+1}$ and 0 otherwise.
		
		In order that $F(x)_i \neq x_i$, there must be at least one term of the sum that is 1, i.e., there must exist $j_0$, $1 \leq j_0 \leq r-1$ such that $x_{i-r+j_0} = x_{i+j_0+1} = 0$, $x_{i+k} = 1$ for $1 \leq k \leq j_0$, and $x_i = x_{i-1} = \dotsc = x_{i - r + j_0 + 1}$. Thus, $(x_{i-r+j_0}, \dotsc, x_{i+j_0+1})$ must consist of $l$ zeros ($1 \leq l \leq r$), followed by $r+1-l$ ones, followed by a single zero. For $-r+j_0 \leq m \leq j_0 + 1$, consider $$F(x)_{i+m} = x_{i+m} \oplus \sum_{j=1}^{r-1} A_j B_j C_j D_j$$ where $A_j=x_{i+m-r+j} \oplus 1$, $B_j=x_{i+m+j+1} \oplus 1$, $C_j=\prod_{k=1}^j x_{i+m+k}$, $D_j=\prod_{k=j+1}^r (x_{i+m-r+k} \oplus 1) \oplus \prod_{k=j+1}^r x_{i+m-r+k}$.
		
		For $m=-r+j_0$, $F(x)_{i+m}=x_{i+m}=0$ because either $l=1$ and $B_j=0$ for all $j$, or $l > 1$ and $x_{i+m+1}=0 \implies C_j=0$ for all $j$. For $m=j_0$, $F(x)_{i+m}=x_{i+m}=1$ because $x_{i+m+1}=0 \implies C_j=0$ for all $j$, and for $m=j_0+1$, $F(x)_{i+m}=x_{i+m}=0$ because either $j=r-1$ and $x_{(i+m)-r+j}=x_{i+j_0}=1 \implies A_j=0$, or $j<r-1$ and $D_j=0$. In short, the first and the last two bits of the interval do not change.
		
		For the remaining bits, first consider $l=1$. For $j=j_0-m$ we have $A_j=x_{i-r+j_0} \oplus 1=1$, $B_j=x_{i+j_0+1} \oplus 1=1$, $C_j=\prod_{k=m+1}^{j_0} x_{i+k}=1$ and $i+m-r+(j+1) = i - r + j_0 + 1$, $i+m-r+r=i+m \leq i + j_0 - 1 \implies D_j=1$. If $j < j_0 - m$, then $D_j=0$. If $j > j_0 - m$, then $A_j = 0$. Therefore, $F(x)_{i+m} = 0 \neq x_{i+m}$ for all $m$, $-r+j_0+1 \leq m \leq j_0-1$.
		
		Suppose $l>1$ henceforth. We have $j_0 = r - l + 1$. If $m < 0$, then $x_{(i+m)+1}=0 \implies C_j=0$ for all $j$, and then $F(x)_{i+m} = x_{i+m} = 0$. If $m=0$, then $A_j B_j C_j D_j=1$ for $j=j_0$. If $j<j_0$, then $B_j=0$, and if $j>j_0$, then $C_j = 0$ since $x_{i+m+j_0+1}=0$. Thus, $F(x)_{i+m} = 1 \neq x_{i+m}$. Finally, if $m > 0$, then either $j>r-m$ and $x_{i+m-r+j}=1 \implies A_j=0$, or $j \leq r-m$ and $D_j=0$, which implies $F(x)_{i+m}=x_{i+m}=1$. Thus, iterating $x \to F(x)$ yields the cycle $(0, \dotsc, 0, 1, 0) \to (0, \dotsc, 0, 1, 1, 0) \to \dotsc \to (0, 1, \dotsc, 1, 0) \to (0, \dotsc, 0, 1, 0)$ of length $r$, and we conclude that $F^{(r)}(x) = x$.
	\end{proof}
	
	\begin{example}
		For $r=2$, we get $f(x)=x_2\oplus(x_1\oplus 1)x_3(x_4\oplus 1)$, which is the Patt function, elementary equivalent to the function considered in the previous examples, yet again shown to be a proper lifting.
	\end{example}

	
	\appendix
	
	\section{List of new liftings with the landscape notation}\label{appendix:landscapes}
	
	The functions below
	are described with the notation of Corollary~\ref{primitive-landscapes},
	and the differential uniformity is given for $n \in \{6, \dotsc, 12\}$.
	It seems reasonable to anticipate that $\operatorname{DU}(F)$ doubles with $n$ when $n$ gets bigger, i.e., that $2^{-n}\operatorname{DU}(F)$ stabilizes when $n$ grows.

	There is only one function of algebraic degree 3:
	
	\[
	\begin{array}{|c|c|} \hline
		\text{Function} & \text{Diff. unif.} \\ \hline
		(0-\star100)\circ(0-\star110)& 24, 56, 112, 216, 480, 864, 1728\\ \hline
	\end{array}
	\]
	
	\newpage
	
	Functions of algebraic degree 4 (the two functions with lowest values are highlighted):
	
	\[
	\begin{array}{|c|c|} \hline
		\text{Function} & \text{Diff. unif.} \\ \hline
		(0\star0111)\circ(0\star1010)& 34, 72, 132, 262, 532, 1064, 2128\\
		(0\star1110)\circ(0\star0011)& 34, 68, 132, 258, 528, 1056, 2112\\
		(0\star-110)& 36, 72, 148, 304, 612, 1204, 2408\\
		(0\star1-10)& 34, 74, 148, 304, 612, 1204, 2408\\
		(0\star10)\circ(0\star1-10)& 22, 38, 80, 150, 300, 600, 1200\\
		(0\star-110)\circ(0\star110)& 22, 48, 104, 208, 412, 810, 1620\\
		(00\star101)\circ(01\star000)& 40, 70, 160, 270, 528, 1056, 2112\\
		(01\star100)\circ(00\star011)& 30, 70, 128, 258, 518, 1032, 2064\\
		(0-\star100)& 38, 76, 152, 314, 616, 1204, 2408\\
		(00\star100)\circ(01\star110)& 36, 64, 130, 258, 502, 980, 1960\\
		(0-\star110)& 36, 72, 148, 304, 612, 1204, 2408\\
		(01\star1-0)& 34, 74, 148, 304, 612, 1204, 2408\\
		(01\star1-0)\circ(01\star00)& 24, 46, 108, 216, 422, 824, 1648\\
		(01\star1-0)\circ(01\star01)& 26, 46, 106, 196, 402, 790, 1580\\
		(01\star1-0)\circ(01\star0)& 20, 36, 80, 152, 312, 610, 1220\\
		(1\star001)\circ(1-\star001)& 32, 62, 130, 256, 502, 1004, 2008\\
		(01\star110)\circ(10\star001)& 64, 128, 256, 512, 1024, 2048, 4096\\
		(0\star011)\circ(100\star11)\circ(10\star11)\circ(0\star0011)\circ(10\star11)\circ(0\star011)& 34, 64, 132, 272, 544, 1088, 2176\\
		(01\star0)\circ(01\star001)\circ(10\star011)& 20, 40, 78, 174, 338, 682, 1364\\
		(0\star10)\circ(10\star0-1)\circ(10\star10)& 20, 44, 80, 176, 360, 714, 1428\\
		(0\star110)\circ(01\star110)\circ(10\star0-1)\circ(10\star110)& 34, 50, 80, 174, 358, 716, 1432\\
		(0\star10)\circ(10\star0-1)& 20, 38, 78, 168, 332, 664, 1328\\
		(00\star10)\circ(0\star110)& 22, 52, 98, 190, 380, 760, 1520\\
		(0\star110)\circ(01\star110)\circ(00\star011)& 22, 44, 88, 178, 356, 708, 1416\\
		(0\star110)\circ(00\star10)& 22, 52, 98, 190, 380, 760, 1520\\
		(0\star10)\circ(0\star110)\circ(0\star10)& 34, 64, 134, 266, 518, 1036, 2072\\
		(0\star110)\circ(01\star110)\circ(10\star001)& 22, 44, 100, 176, 350, 696, 1392\\
		(0\star110)\circ(01\star110)\circ(10\star011)& 22, 46, 90, 180, 368, 738, 1476\\
		(0\star110)\circ(10\star10)& 26, 46, 102, 192, 384, 768, 1536\\
		\cellcolor{lightgray}(0\star10)\circ(0\star110)& \cellcolor{lightgray}18, 36, 68, 132, 264, 528, 1056\\
		\cellcolor{lightgray} (00\star10)\circ(0\star110)\circ(0\star10)& \cellcolor{lightgray} 18, 32, 66, 120, 234, 468, 936\\
		(10\star111)\circ(00\star100)& 40, 60, 134, 268, 512, 1026, 2052\\
		(10\star111)\circ(00\star101)& 40, 64, 160, 244, 502, 1008, 2016\\
		(01\star000)\circ(10\star111)& 64, 128, 256, 512, 1024, 2048, 4096\\
		(0\star10)\circ(10\star111)\circ(00\star101)& 22, 46, 102, 202, 396, 802, 1604\\
		(0\star10)\circ(10\star111)\circ(00\star100)& 22, 52, 90, 198, 396, 796, 1592\\
		(00\star011)\circ(11\star001)& 34, 72, 130, 262, 518, 1036, 2072\\
		(00\star101)\circ(11\star010)& 64, 128, 256, 512, 1024, 2048, 4096\\
		(0\star011)\circ(1-\star011)& 32, 62, 130, 260, 506, 1012, 2024\\
		(0\star011)\circ(11\star011)\circ(11\star001)& 24, 46, 90, 192, 378, 748, 1496\\
		(0\star011)\circ(11\star01)& 24, 52, 106, 200, 400, 800, 1600\\
		(001\star11)\circ(110\star00)& 64, 128, 256, 512, 1024, 2048, 4096\\ \hline
	\end{array}
	\]

	\newpage
	Functions of algebraic degree 5 (the two functions with lowest values are highlighted):
	
	\[
	\begin{array}{|c|c|} \hline
		\text{Function} & \text{Differential uniformity} \\ \hline
		(0\star0011)& 46, 90, 180, 364, 732, 1472, 2944\\
		(0\star0110)& 46, 88, 180, 360, 724, 1440, 2880\\
		(0\star0111)& 46, 90, 184, 368, 740, 1488, 2976\\
		(0\star0011)\circ(0\star011)& 28, 52, 114, 244, 502, 1008, 2016\\
		(0\star1010)& 44, 92, 184, 368, 736, 1468, 2936\\
		(0\star10)\circ(0\star1010)& 34, 62, 116, 232, 460, 928, 1856\\
		(0\star10)\circ(0\star011)& 12, 24, 58, 118, 236, 472, 944\\
		(0\star110)\circ(0\star0011)& 32, 58, 110, 220, 442, 884, 1768\\
		(0\star0110)\circ(0\star110)& 28, 50, 106, 208, 408, 814, 1628\\
		(0\star0110)\circ(0\star110)\circ(0\star0011)& 24, 44, 82, 166, 330, 660, 1320\\
		(0\star110)\circ(0\star0111)& 32, 50, 108, 216, 436, 874, 1748\\
		(0\star1110)& 46, 90, 180, 364, 730, 1454, 2908\\
		(0\star10)\circ(0\star1110)& 26, 50, 106, 202, 408, 810, 1620\\
		(0\star1110)\circ(0\star110)& 26, 54, 112, 224, 444, 884, 1768\\
		(0\star1110)\circ(0\star110)\circ(0\star0111)& 22, 48, 90, 186, 370, 738, 1476\\
		(0\star1110)\circ(0\star110)\circ(0\star10)& 16, 34, 68, 132, 262, 520, 1040\\
		(00\star011)& 46, 90, 182, 368, 736, 1480, 2960\\
		(00\star100)& 46, 94, 198, 386, 750, 1492, 2984\\
		(00\star101)& 44, 88, 178, 360, 728, 1464, 2928\\
		(00\star110)& 46, 90, 180, 368, 732, 1460, 2920\\
		(00\star110)\circ(00\star10)& 28, 60, 120, 236, 472, 940, 1880\\
		(01\star000)& 46, 92, 186, 376, 748, 1492, 2984\\
		(01\star001)& 44, 92, 198, 364, 728, 1464, 2928\\
		(01\star0)\circ(01\star001)& 34, 66, 126, 254, 508, 1012, 2024\\
		(01\star0)\circ(01\star000)& 34, 66, 116, 236, 468, 944, 1888\\
		(01\star0)\circ(00\star011)& 30, 48, 92, 182, 364, 736, 1472\\
		(01\star100)& 46, 90, 178, 364, 728, 1448, 2896\\
		(0-\star100)\circ(00\star110)& 34, 60, 118, 264, 500, 964, 1928\\
		(01\star100)\circ(01\star0)& 22, 40, 84, 162, 324, 646, 1292\\
		(01\star110)& 46, 90, 182, 360, 726, 1446, 2892\\
		(01\star110)\circ(01\star0)& 18, 34, 72, 136, 276, 548, 1096\\
		(01\star110)\circ(01\star0)\circ(00\star011)& 20, 36, 72, 126, 260, 518, 1036\\
		(00\star100)\circ(01\star1-0)& 30, 64, 128, 256, 522, 1006, 2012\\
		(01\star1-0)\circ(01\star000)& 30, 56, 120, 264, 520, 1002, 2004\\
		(01\star1-0)\circ(01\star001)& 32, 52, 108, 228, 440, 860, 1720\\
		(01\star1-0)\circ(01\star001)\circ(01\star0)& 26, 46, 92, 196, 388, 752, 1504\\
		(01\star1-0)\circ(01\star000)\circ(01\star0)& 22, 38, 82, 156, 304, 604, 1208\\
		(0\star011)\circ(100\star11)\circ(10\star11)\circ(0\star0011)\circ(10\star11)\circ(0\star011)\circ(00\star100)& 18, 42, 94, 196, 374, 744, 1488\\
		(0\star011)\circ(100\star11)\circ(10\star11)\circ(0\star0011)\circ(10\star11)\circ(0\star0111)& 22, 48, 102, 220, 440, 878, 1756\\
		(01\star110)\circ(0\star011)& 24, 58, 108, 224, 458, 912, 1824\\
		(0\star011)\circ(100\star11)\circ(10\star11)\circ(0\star0011)\circ(10\star11)\circ(0\star011)\circ(10\star001)& 22, 50, 92, 190, 388, 770, 1540\\
		(0\star10)\circ(00\star101)& 34, 54, 122, 238, 468, 944, 1888\\
		(0\star10)\circ(00\star100)& 34, 58, 110, 232, 470, 932, 1864\\
		(10\star10)\circ(00\star110)& 34, 54, 116, 218, 436, 868, 1736\\
		(00\star110)\circ(0\star10)& 22, 42, 86, 160, 328, 652, 1304\\
		(0\star10)\circ(10\star001)& 30, 42, 84, 176, 352, 704, 1408\\ \hline
	\end{array}
	\]
	
	\[
	\begin{array}{|c|c|} \hline
		\text{Function} & \text{Diff. unif.} \\ \hline
		(0\star10)\circ(10\star011)& 20, 32, 76, 156, 310, 620, 1240\\
		(00\star110)\circ(00\star10)\circ(0\star110)& 22, 44, 100, 180, 358, 716, 1432\\
		(0\star110)\circ(01\star110)& 28, 62, 126, 252, 498, 990, 1980\\
		(10\star110)\circ(00\star10)& 28, 54, 118, 212, 422, 848, 1696\\
		(10\star110)\circ(00\star10)\circ(0\star110)& 24, 46, 102, 196, 380, 764, 1528\\
		(0\star110)\circ(10\star001)& 32, 50, 102, 192, 400, 794, 1588\\
		(0\star110)\circ(0\star011)& 24, 38, 84, 168, 336, 672, 1344\\
		(00\star110)\circ(00\star10)\circ(0\star110)\circ(10\star011)& 22, 28, 64, 126, 256, 510, 1020\\
		(0\star110)\circ(01\star110)\circ(0\star011)& 22, 40, 80, 166, 330, 656, 1312\\
		(0\star10)\circ(0\star110)\circ(0\star10)\circ(10\star011)& 34, 38, 76, 158, 328, 652, 1304\\
		(0\star110)\circ(01\star110)\circ(10\star0-1)& 24, 36, 72, 146, 300, 594, 1188\\
		\cellcolor{lightgray} (0\star10)\circ(0\star110)\circ(01\star00)& \cellcolor{lightgray} 10, 26, 42, 72, 144, 288, 576\\
		(0\star10)\circ(0\star110)\circ(000\star10)& 16, 34, 54, 110, 214, 428, 856\\
		(10\star110)\circ(0\star10)& 18, 48, 80, 162, 316, 636, 1272\\
		(0\star10)\circ(0\star110)\circ(100\star10)& 16, 34, 58, 110, 214, 432, 864\\
		(0\star10)\circ(0\star110)\circ(10\star001)& 16, 36, 56, 106, 220, 440, 880\\
		(10\star110)\circ(0\star10)\circ(10\star001)& 20, 34, 60, 128, 250, 502, 1004\\
		\cellcolor{lightgray}(0\star110)\circ(0\star10)\circ(10\star011)& \cellcolor{lightgray}16, 24, 46, 96, 194, 388, 776\\
		(00\star10)\circ(10\star111)& 28, 50, 122, 200, 404, 806, 1612\\
		(10\star111)\circ(00\star101)\circ(01\star000)& 40, 72, 144, 224, 464, 728, 1600\\
		(0\star10)\circ(10\star0-1)\circ(10\star110)\circ(10\star1)& 16, 28, 72, 144, 294, 574, 1148\\
		(0\star10)\circ(10\star111)& 20, 48, 90, 194, 386, 770, 1540\\
		(10\star1)\circ(10\star110)\circ(0-\star100)& 18, 40, 72, 162, 314, 644, 1288\\
		(0\star10)\circ(10\star0-1)\circ(10\star111)& 20, 38, 60, 170, 324, 644, 1288\\
		(10\star1)\circ(0\star10)& 28, 38, 100, 152, 304, 608, 1216\\
		(10\star1)\circ(00\star10)& 20, 38, 76, 154, 308, 616, 1232\\
		(00\star101)\circ(01\star000)\circ(11\star010)& 40, 72, 144, 224, 504, 728, 1744\\
		(0\star011)\circ(11\star010)& 32, 50, 116, 206, 408, 820, 1640\\
		(0\star011)\circ(11\star011)& 30, 64, 128, 254, 504, 1004, 2008\\
		(001\star1)\circ(001\star00)\circ(011\star00)\circ(01\star00)& 24, 32, 72, 138, 274, 548, 1096\\
		(001\star1)\circ(110\star00)& 46, 72, 128, 260, 504, 860, 1672\\ \hline
	\end{array}
	\]

	\bibliographystyle{plain}

\begin{thebibliography}{1}
		
		\bibitem{JDA-thesis}
		Joan Daemen.
		\newblock Cipher and hash function design strategies based on linear and
		differential cryptanalysis.
		\newblock PhD thesis, KU Leuven, 1995.
		\newblock \href{https://cs.ru.nl/~joan/papers/JDA_Thesis_1995.pdf}{pdf}.
		
		\bibitem{HO-1}
		Jan~Kristian Haugland and Tron Omland.
		\newblock Shift-invariant functions and almost liftings, 2024.
		\newblock \url{https://arxiv.org/abs/2407.11931}.
		
		\bibitem{kari-survey}
		Jarkko Kari.
		\newblock Theory of cellular automata: a survey.
		\newblock {\em Theoret. Comput. Sci.}, 334(1-3):3--33, 2005.
		
		\bibitem{Mariot-24}
		Luca Mariot.
		\newblock Insights gained after a decade of cellular automata-based
		cryptography.
		\newblock In Maximilien Gadouleau and Alonso Castillo-Ramirez, editors, {\em
			Cellular Automata and Discrete Complex Systems}, pages 35--54, Cham, 2024.
		Springer Nature Switzerland.
		
		\bibitem{OEIS}
		The On-Line~Encyclopedia of~Integer~Sequences.
		\newblock A027375.
		\newblock \url{https://oeis.org/A027375}.
		
		\bibitem{OS-iacr}
		Tron Omland and Pantelimon Stanica.
		\newblock Permutation rotation-symmetric s-boxes, liftings and affine
		equivalence.
		\newblock Cryptology ePrint Archive, Paper 2022/279, 2022.
		\newblock \url{https://eprint.iacr.org/2022/279}.
		
		\bibitem{patt}
		Yale~N.\ Patt.
		\newblock Injections of neighborhood size three and four on the set of
		configurations from the infinite one-dimensional tessellation automata of
		two-state cells, unpublished report, 1971.
		\newblock \url{https://apps.dtic.mil/sti/citations/AD0748072}.
		
		\bibitem{toffoli-margolus}
		Tommaso Toffoli and Norman~H. Margolus.
		\newblock Invertible cellular automata: A review.
		\newblock {\em Physica D: Nonlinear Phenomena}, 45(1):229--253, 1990.
		
	\end{thebibliography}

\end{document}